\newtheorem{theo}{Theorem}
\newtheorem{theorem}{Theorem}[section]
\newtheorem{example}[theorem]{Example}
\newtheorem{remark}[theorem]{Remark}
\newtheorem{lemma}[theorem]{Lemma}
\newtheorem{corollary}[theorem]{Corollary}
\def\cat{{\rm{cat}\hskip1pt}}
\def\TC{{\rm{TC}\hskip1pt}}
\begin{document}
	\title[An upper bound for $\TC_0$ of a family of elliptic spaces]{An upper bound for the rational topological complexity of a family of elliptic spaces}
	
	\author{Said Hamoun}
	\author{Youssef Rami}
	\address{My Ismail University of Mekn\`es, Department of Mathematics, B. P. 11 201 Zitoune, Mekn\`es, Morocco.}
	\email{s.hamoun@edu.umi.ac.ma}
	\email{y.rami@umi.ac.ma}
	\author{Lucile Vandembroucq}
	\address{Centro de Matem\'atica, Universidade do Minho, Campus de Gualtar, 4710-057 Braga, Portugal.}
	\email{lucile@math.uminho.pt}
	\begin{abstract} In this work, we show that, for any simply-connected elliptic space $S$ admitting a pure minimal Sullivan model with a differential of constant length, we have $\TC_0(S)\leq 2\cat_0(S)+\chi_{\pi}(S)$ where $\chi_{\pi}(S)$ is the homotopy characteristic. This is a consequence of a structure theorem for this type of models, which is actually our main result.
	\end{abstract}
	
	\keywords{Rational topological complexity, Elliptic spaces, Regular sequences}
	
	\subjclass[2010]{
	55P62,	55M30. }		
	\maketitle
	
\section{Introduction}
Let $S$ be a path-connected topological space. In his work \cite{FM}, M. Farber introduced the notion of topological complexity of $S$ denoted by $\TC(S)$. This is a homotopy invariant defined as the least integer $m$ for which the map $ev_{0,1}: S^{[0,1]}\rightarrow S\times S$, $\lambda \rightarrow (\lambda  (0),\lambda (1))$ admits $ m+1$ local continuous sections $s_i: U_i \rightarrow S ^{[0,1]}$ where $\{U_i\}_{i=0,\cdots,m}$ is a family of open subsets covering $S\times S$. If $S$ is a simply-connected space of finite type and $S_{\!{0}}$ is its rationalization, then the rational topological complexity of $S$, denoted and defined by $\TC_0(S):=\TC(S_{\!{0}})$, provides a lower bound for $\TC(S)$. Through rational homotopy techniques, $\TC_0$ can be expressed in terms of Sullivan models (\cite{C}, \cite{CKV}) in the same spirit as $\cat_0$, the rational Lusternik–Schnirelmann category, was characterized by Félix and Halperin \cite{FH}. Recall that a Sullivan model of $S$ (model for short) is a commutative differential graded algebra $(\Lambda V,d)$ which contains all the information on the rational homotopy type of $S$. In particular, $H^*(S;\mathbb{Q})=H^*(\Lambda V,d)$ and if the model is minimal, that is, $dV\subset \Lambda^{\geq 2} V$, then we have $V\cong \pi_*(S)\otimes \mathbb{Q}$. The standard reference is \cite{FHT}. When there exists an integer $l\geq 2$ such that $dV\subset \Lambda^{l} V$, we say that $d$ is of constant length $l$. In  particular when $l=2$, $(\Lambda V,d)$ is said \textit{coformal}. In this article, we establish the following result which is an improvement of our Theorem B in \cite{HRV}.
\begin{theo} \label{th1.1.2}
Let $S$ be an elliptic space admitting a pure minimal Sullivan model $(\Lambda V,d)$ where $d$ is of constant length. Then
	$$ \TC_0(S)\leq 2\cat_0(S
	) +\chi_{\pi}(S)$$	
where $\chi_{\pi}(S)$ denotes the homotopy characteristic of $S$.
\end{theo}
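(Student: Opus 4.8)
The plan is to obtain the bound exactly as the abstract announces, namely as a corollary of a structure theorem for the model. Throughout I use the Sullivan–model descriptions of the invariants: the Félix–Halperin characterisation of $\cat_0$, and, for $\TC_0$, its description as the (module) sectional category of the multiplication $\mu\colon (\Lambda V\otimes\Lambda V,D)\to(\Lambda V,d)$ of a minimal model together with the relative Sullivan model of the path fibration $ev_{0,1}$. Recall also that for an elliptic space $\chi_\pi(S)=\dim V^{\mathrm{even}}-\dim V^{\mathrm{odd}}\le 0$, so the statement refines the universal inequality $\TC_0(S)\le 2\cat_0(S)$ precisely by $-\chi_\pi(S)$; thus the whole content lies in the $\chi_\pi$–correction. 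The strategy is: (a) prove a structure theorem describing $(\Lambda V,d)$ via a regular sequence plus "excess" odd generators; (b) read off the bound from that structure.

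First I would prove the structure theorem. Put $n=\dim V^{\mathrm{even}}$, $m=\dim V^{\mathrm{odd}}$ and $q=-\chi_\pi(S)=m-n\ge 0$. By purity $d$ kills $V^{\mathrm{even}}$ and sends $V^{\mathrm{odd}}$ into the word–length‑$\ell$ part of $\Q[V^{\mathrm{even}}]$, and ellipticity says $\Q[V^{\mathrm{even}}]/(dV^{\mathrm{odd}})$ is finite dimensional, i.e. the ideal $(dV^{\mathrm{odd}})$ is $\mathfrak m$–primary, of height $n$. The heart of the matter is then commutative algebra: using that $\Q$ is infinite and that polynomial rings are Cohen–Macaulay, and using the constant–length hypothesis to keep all choices homogeneous, one extracts a subspace $W_1\subseteq V^{\mathrm{odd}}$ with $\dim W_1=n$ such that $d|_{W_1}$ is a regular sequence; then pick any complement $V^{\mathrm{odd}}=W_1\oplus W_2$, so $\dim W_2=q$. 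The subalgebra $(\Lambda(V^{\mathrm{even}}\oplus W_1),d)$ is a pure, minimal, elliptic model with as many even as odd generators — the model of an $F_0$–space $F$, with $\chi_\pi(F)=0$ and $H^{\mathrm{odd}}(F;\Q)=0$ — and, since $dW_2\subseteq\Lambda V^{\mathrm{even}}$, the inclusion $(\Lambda(V^{\mathrm{even}}\oplus W_1),d)\hookrightarrow(\Lambda V,d)$ is a Koszul–Sullivan extension with free, differential–trivial quotient $(\Lambda W_2,0)$. Topologically, $S$ is rationally the total space of a fibration $T\to S\to F$ whose fibre $T$ is a product of $q$ odd–dimensional spheres. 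This is the structure theorem, and the step I expect to be the real obstacle is producing the length‑$n$ regular sequence inside $V^{\mathrm{odd}}$ itself (not merely inside the ideal $(dV^{\mathrm{odd}})$), which forces a degree‑by‑degree homogeneous prime–avoidance argument and is exactly where "constant length" is used.

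To deduce the theorem I would establish three facts about that extension. (i) $\cat_0(S)=\cat_0(F)+q$: the inequality $\le$ follows by adjoining the generators of $W_2$ one at a time, each an odd "spherical" Koszul–Sullivan extension raising $\cat_0$ by at most one; for $\ge$ one builds, from a top–degree cocycle of $F$ of word length $\cat_0(F)=e_0(F)$ together with the generators of $W_2$, a top–degree cocycle of $(\Lambda V,d)$ of word length $\cat_0(F)+q$ that remains essential, so $e_0(S)\ge\cat_0(F)+q$; and $\cat_0(S)=e_0(S)$ since $S$ is elliptic. (ii) $\TC_0(S)\le\TC_0(F)+q$: building the relative Sullivan model of $ev_{0,1}$ for $S$ on top of that for $F$ — the new generators contributing a free tensor factor with zero differential coming from $\Lambda W_2\otimes\Lambda W_2$ — one checks, in Carrasquel's module sense, that a homotopy retraction onto the length‑$\TC_0(F)$ truncation for $F$ extends, at a cost of $q$ further units of length, to one for $S$. (iii) $\TC_0(F)\le 2\cat_0(F)$, the universal bound for the $F_0$–space $F$. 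Combining,
\[
\TC_0(S)\ \le\ \TC_0(F)+q\ \le\ 2\cat_0(F)+q\ =\ 2\bigl(\cat_0(S)-q\bigr)+q\ =\ 2\cat_0(S)-q\ =\ 2\cat_0(S)+\chi_\pi(S),
\]
which is the assertion. Equivalently one merges (i)–(iii) into a single induction on $q=-\chi_\pi(S)$: the base case $q=0$ is the universal bound for the $F_0$–space $S$, and the inductive step reduces to the two statements that a single word–length‑$\ell$ spherical Koszul–Sullivan extension raises $\cat_0$ by exactly one and $\TC_0$ by at most one. After the structure theorem, step (ii) — the fibered upper bound for $\TC_0$ — is the most technical point, requiring an explicit analysis of the relative model of $ev_{0,1}$ for the product–of–odd–spheres fibre and the construction of the corresponding homotopy retraction.
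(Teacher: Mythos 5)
Your overall plan is the same as the paper's: prove the structure theorem (existence of an $F_0$-basis extension $(\Lambda Z,d)\hookrightarrow(\Lambda V,d)$ with $Z^{\mathrm{even}}=V^{\mathrm{even}}$, i.e.\ the fibration over an $F_0$-space with fibre a product of odd spheres), and then deduce the $\TC_0$ bound. Your deduction chain $\TC_0(S)\le\TC_0(F)+q\le 2\cat_0(F)+q=2\cat_0(S)+\chi_\pi(S)$ is a reasonable reconstruction of what the paper packages into a single citation (\cite[Th.\ 4.2]{HRV}, together with \cite[Cor.\ 3.4]{HRV} for the fibrewise $\TC_0$ bound and the Lechuga--Murillo formula for $\cat_0$); in the paper the proof of Theorem A is literally ``structure theorem $+$ \cite[Th.\ 4.2]{HRV}.'' So on that side you are essentially aligned with the paper, modulo a small imprecision: in (ii) the quotient $(\Lambda W_2,0)$ indeed has zero induced differential, but inside $\Lambda V$ the generators of $W_2$ are \emph{not} cocycles, so the model of $ev_{0,1}$ is not built from a ``free tensor factor with zero differential''; the correct input is the general fibrewise bound of \cite[Cor.\ 3.4]{HRV}.

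The genuine gap is in the proof you propose for the structure theorem. You reduce it to extracting, by ``degree-by-degree homogeneous prime avoidance,'' a length-$n$ regular sequence from among the images $du$ with $u$ \emph{homogeneous} in $V^{\mathrm{odd}}$, and you flag this as the real obstacle without supplying the argument. The paper does \emph{not} do this by prime avoidance, and it is not clear that a greedy/prime-avoidance scheme succeeds: the obstruction, illustrated concretely by the paper's Example \ref{ex}, is precisely that each graded piece $dY_d$ can be trapped inside an associated prime of the partial quotient, so that a homogeneous choice at a given step may not exist even though a non-homogeneous Halperin basis does. The paper instead proceeds by induction on $\dim V^{\mathrm{even}}$: Lemma \ref{lm} handles the ``first stage'' (lowest-degree even generators and the odd generators mapping into them), where all relevant $dy$'s have the same degree so Halperin's basis is automatically homogeneous; then one quotients by the resulting $F_0$-subalgebra, applies the induction hypothesis to the smaller pure elliptic quotient, lifts the chosen odd generators back, and verifies ellipticity of $\Lambda Z$ by an explicit iterative computation (equation (\ref{eq})) that kills the error terms $\gamma_i$ using the finite formal dimension of the $F_0$-subalgebra. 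Your sketch skips exactly this lifting/ellipticity step, which is the technical heart of the paper, and does not explain how the constant-length hypothesis would rescue a direct homogeneous prime-avoidance argument.
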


Recall that $S$ (or equivalently its minimal model $(\Lambda V,d)$) is {\it elliptic} if both $\dim \pi_*(S)\otimes \mathbb{Q}=\dim V$ and $\dim H^*(S; \mathbb{Q})= \dim H^*(\Lambda V,d)$ are finite. The model $(\Lambda V,d)$ is said {\it pure} if $dV^{even}=0$ and $dV^{odd}\subset \Lambda V^{even}$. We also recall that the homotopy characteristic of $S$ is $\chi_{\pi}(S)=\dim \pi_{even}(S)\otimes \mathbb{Q}-\dim \pi_{odd}(S)\otimes \mathbb{Q}=\dim V^{even}-\dim V^{odd}$. When $S$ is elliptic, we always have $\chi_{\pi}(S)\leq 0$, that is, $\dim V^{odd}\geq \dim V^{even}$. Moreover, if $\chi_{\pi}(\Lambda V):=\chi_{\pi}(S)=0$, then the elliptic model $(\Lambda V,d)$ is said an $F_0$-model. Given a pure elliptic model $(\Lambda V,d)$, we will refer as an \textit{$F_0$-basis extension} to a relative Sullivan model of the form $(\Lambda Z,d) \hookrightarrow (\Lambda V,d)$ where $Z$ is a graded subspace of $V$ and the pure model $(\Lambda Z,d)$ is an $F_0$-model. As is known, the existence of such an $F_0$-basis extension can be impossible, see for instance Example \ref{ex} below.

In \cite[Theorem B]{HRV}, we obtained the same upper bound as in Theorem \ref{th1.1.2} assuming that the differential $d$ has constant length and, in addition, that there exists an $F_0$-basis extension $(\Lambda Z,d) \hookrightarrow (\Lambda V,d)$ such that $Z^{even}=V^{even}$. Here, we will see that this latter additional hypothesis can be relaxed. This will follow from the following structure theorem which, in comparison to \cite[Lemma 8]{H}, may have its own interest. 
	\begin{theo} \label{th1.2.2}
	Let $(\Lambda V,d)$ be a pure elliptic minimal model where $d$ is a differential of constant length. Then there exists an $F_0$-basis extension  $$(\Lambda  Z,d)  \hookrightarrow (\Lambda V,d)$$ where $Z^{even}=V^{even}$.
\end{theo}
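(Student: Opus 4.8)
We deduce the theorem from a statement in commutative algebra. Set $R=\Lambda V^{even}\cong\mathbb{Q}[x_1,\dots,x_n]$, a polynomial ring on $n=\dim V^{even}$ generators of positive even degree, and let $\mathfrak{m}=\Lambda^{\geq 1}V^{even}$ be its irrelevant ideal. For any pure Sullivan algebra $(\Lambda W,d)$ with $\dim W<\infty$, the cohomology $H^*(\Lambda W,d)$ is a finitely generated module over $A:=\Lambda W^{even}/(dW^{odd})$ which contains $A$ as a subalgebra; hence (see \cite{FHT}) $(\Lambda W,d)$ is elliptic if and only if $(dW^{odd})$ is $\mathfrak{m}$-primary, which in particular forces $\dim W^{odd}\geq\dim W^{even}$. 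We argue by induction on $-\chi_{\pi}(\Lambda V)=\dim V^{odd}-\dim V^{even}\geq 0$. When it is $0$, $(\Lambda V,d)$ is already an $F_0$-model and $Z=V$ works. For the inductive step it suffices to produce a graded subspace $U\subsetneq V^{odd}$ of codimension one with $(dU)$ still $\mathfrak{m}$-primary: then $(\Lambda(V^{even}\oplus U),d)$ is again pure elliptic with differential of constant length $l$ and with $-\chi_{\pi}$ one less, so the inductive hypothesis supplies an $F_0$-basis extension $(\Lambda Z,d)\hookrightarrow(\Lambda(V^{even}\oplus U),d)$ with $Z^{even}=(V^{even}\oplus U)^{even}=V^{even}$; composing it with $(\Lambda(V^{even}\oplus U),d)\hookrightarrow(\Lambda V,d)$, which is a relative Sullivan model because $d$ carries every complementary generator into $\Lambda V^{even}$, concludes the induction.

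To carry out the inductive step, note first that if $d$ is not injective on $V^{odd}$ we may take $U$ to be a graded complement of $\langle z\rangle$ for a homogeneous $0\neq z\in\ker(d|_{V^{odd}})$, since then $dU=dV^{odd}$. So assume $d|_{V^{odd}}$ is injective and put $D:=d(V^{odd})\subseteq R$; then $D$ is a graded subspace (for the cohomological degree) with $(D)$ $\mathfrak{m}$-primary, $\dim D=\dim V^{odd}>n$, and---crucially---because $d$ has constant length $l$, every nonzero element of $D$ is a homogeneous polynomial of word-length exactly $l$. Thus the inductive step reduces to the following: \emph{a graded subspace $D\subseteq R$ whose nonzero elements all have word-length $l$, with $(D)$ $\mathfrak{m}$-primary and $\dim D>n$, admits a graded hyperplane $E$ with $(E)$ still $\mathfrak{m}$-primary}; one then takes $U:=d^{-1}(E)$.

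To prove this, regrade $R$ by word-length, for which it becomes a \emph{standard} graded polynomial ring with $D$ concentrated in degree $l$; so $D$ is bigraded and base-point-free as a linear system of forms of degree $l$ on $\mathbb{P}^{n-1}$, and we must find a bigraded---equivalently, since $D$ lies in a single word-length, cohomologically graded---subspace $E\subseteq D$ of dimension $n$ that is still base-point-free. We pick cohomologically homogeneous $dz_1,\dots,dz_n\in D$ forming a regular sequence in $R$: having chosen $dz_1,\dots,dz_k$ with $k<n$, the ring $R/(dz_1,\dots,dz_k)$ is Cohen--Macaulay of dimension $n-k\geq 1$, so all its associated primes $\mathfrak{p}_1,\dots,\mathfrak{p}_r$ have dimension $n-k$; being generated by bihomogeneous elements they are bigraded, none equals $\mathfrak{m}$, and---as $\sqrt{(D)}=\mathfrak{m}$---none contains $D$. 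One then produces a cohomologically homogeneous element of $D$ outside $\mathfrak{p}_1\cup\dots\cup\mathfrak{p}_r$ by a graded prime-avoidance argument in the degree-$l$ part of the standard-graded ring $\mathbb{Q}[x_1,\dots,x_n]$ (choosing judiciously the cohomological degree from which the element is extracted; this degenerates to a generic-hyperplane dimension count when that graded piece of $D$ is large, and to a direct argument on monomials when the pieces are small), and this element is the desired $dz_{k+1}$, a non-zero-divisor modulo $(dz_1,\dots,dz_k)$. After $n$ steps $E:=\langle dz_1,\dots,dz_n\rangle$ is a graded subspace of $D$ with $(E)$ $\mathfrak{m}$-primary.

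The heart of the proof---and the step I expect to require the most care---is exactly this extraction of $dz_{k+1}$ \emph{inside $D$} and cohomologically homogeneous, rather than merely inside the ideal $(D)$: a naive greedy choice can stall, so the successive choices must be coordinated. It is precisely the single-word-length feature granted by the constant-length hypothesis that renders the relevant ideals bigraded and transfers the problem to a standard-graded polynomial ring, where the extraction succeeds; when $d$ has mixed lengths this breaks down and no $F_0$-basis extension with $Z^{even}=V^{even}$ need exist---this is the phenomenon recorded in Example~\ref{ex}, and it is what Theorem~\ref{th1.2.2} shows the constant-length hypothesis rules out, complementing the structural information of \cite[Lemma~8]{H}.
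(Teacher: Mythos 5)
Your approach is genuinely different from the paper's. The paper first isolates a ``first stage'' $(\Lambda V_1,d)$, generated by the even generators of minimal degree $|x_1|$ and the odd generators of degree $\le l|x_1|-1$, and notes that in this subalgebra all odd generators with nonzero differential sit in a \emph{single} cohomological degree, so the a priori non-homogeneous regular-sequence basis from Halperin's Lemma is automatically homogeneous there; it then inducts on $\dim V^{even}$ by pushing to the quotient $(\Lambda W,\bar d)$, lifts the resulting $F_0$-basis to $(\Lambda V,d)$, and verifies ellipticity of the lifted $\Lambda Z$ by an explicit iterative correction that kills the error terms $\gamma_i\in\Lambda^+(x_1,\dots,x_p)\otimes\Lambda(x_{p+1},\dots,x_n)$ modulo the formal dimension of $(\Lambda E,d)$. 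You instead aim straight at producing a cohomologically homogeneous regular sequence $dz_1,\dots,dz_n$ inside $D:=d(V^{odd})$ by iterated graded prime avoidance. If this worked, it would give a shorter and more conceptual proof and would also make your outer induction on $-\chi_\pi$ unnecessary (once you have the regular sequence, $Z^{odd}=\langle z_1,\dots,z_n\rangle$ already does the job).

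However, the heart of your argument contains a genuine gap, and you essentially say so yourself (``a naive greedy choice can stall, so the successive choices must be coordinated''; ``choosing judiciously the cohomological degree\dots''). The difficulty is concrete: $D$ decomposes as $\bigoplus_e D_e$ over cohomological degrees $e$, and the standard graded prime-avoidance lemma only lets you find $\xi\in D_e\setminus(\mathfrak p_1\cup\dots\cup\mathfrak p_r)$ in a degree $e$ for which $D_e\not\subseteq\mathfrak p_i$ for \emph{every} $i$. That such a degree exists does not follow from the fact that no single $\mathfrak p_i$ contains all of $D$; it is perfectly possible that each piece $D_e$ is swallowed by some (varying) $\mathfrak p_{i(e)}$. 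Indeed the paper's Example~\ref{ex} exhibits exactly this failure mode: with $dz_1=dy_1=x_1(x_1^4+x_2^3)$, the associated primes are $(x_1)$ and $(x_1^4+x_2^3)$, and one finds $D_{32}=\langle dy_2\rangle\subseteq(x_1^4+x_2^3)$ and $D_{34}=\langle dy_3\rangle\subseteq(x_1)$, so every graded piece is trapped. That example has non-constant length, and the theorem asserts the constant-length hypothesis precludes such traps; but your proposal does not actually use the hypothesis to rule them out. The remark that $D$ is concentrated in word-length $l$ makes all the ideals bigraded, which is a correct and relevant observation, but it does not by itself yield a degree $e$ with $D_e\not\subseteq\mathfrak p_i$ for all $i$. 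Until you supply the coordination mechanism you allude to---showing how the constant-length structure forces the avoidance to succeed at each step---the proof is incomplete at its central point.

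A smaller structural remark: the induction on $-\chi_\pi(\Lambda V)$ and the ``graded hyperplane $E\subsetneq D$'' reformulation are redundant given your final aim; the only thing you actually need, and the only thing you attempt to prove, is the regular-sequence extraction.
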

Note that this means that $(\Lambda V,d)$ is the model of the total space of a fibration over an $F_0$-space with fibre a product of odd-dimensional spheres.

We prove Theorem \ref{th1.1.2} in Section \ref{S22} and derive its applications to rational topological complexity in Section \ref{S32}.
\section{Structure theorem} \label{S22}
In the sequel, we assume that $S$ is a simply-connected CW-complex of finite type admitting a pure minimal Sullivan model $(\Lambda V,d)$.

 We suppose that $\dim V $ is finite and use the notations $X=V^{even}$ and $Y=V^{odd}$. If $\mathcal{B}=\{x_1,\cdots,x_n\}$ is a basis of $X$, then $(\Lambda V,d)$ is elliptic if and only if for any $x_i \in \mathcal{B}$ there exists $N_i \in \mathbb{N}$ such that $[x_i^{N_i}]=0$ in $H^*(\Lambda V,d)$. It is then easy to see that, given a surjective morphism $\varphi :(\Lambda V,d)\rightarrow (\Lambda W,d)$, if $(\Lambda V,d)$ is pure, minimal and elliptic, then so is $(\Lambda W,d)$.

Let $\alpha_1, \cdots,\alpha_p$ be a family of elements in $\Lambda^+X$. The family $\alpha_1, \cdots,\alpha_p$ is said a \textit{regular sequence} in $\Lambda^+X$ if it satisfies the two following conditions:
\begin{itemize}
\item[•] $\alpha_1$ is not a zero divisor in $\Lambda ^+X$
\item[•] For all $i=2,\cdots,p$, $\alpha_i$ is not a zero divisor in $\Lambda ^+X/( \alpha_1,\cdots , \alpha_{i-1} )$ where $( \alpha_1,\cdots , \alpha_{i-1} )$ is the ideal of $\Lambda ^+X$ generated by $\alpha_1,\cdots,\alpha_{i-1}$.
\end{itemize}
Note that, since we are considering $X=V^{even}$, the first condition is automatically satisfied as soon as $\alpha_1\neq 0$.
We recall the following result due to Halperin. 
\begin{theorem}{(\cite[Lemma 8]{H}, see also \cite[Prop 5.4.5]{F})} \label{th2.1.2}
Let $(\Lambda V,d)= (\Lambda (X\oplus Y),d)$ be a pure elliptic Sullivan model. There exists a basis (not necessarily homogeneous) $u_1, \cdots, u_m$ of $Y$ such that $du_1,\cdots, du_n$ is a regular sequence in $\Lambda X$ with $n=\dim X$. 
\end{theorem}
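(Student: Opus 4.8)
Here is the plan.

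\medskip

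\noindent\emph{Reformulation.} The plan is to translate the statement into commutative algebra over the polynomial ring $R:=\Lambda X\cong\mathbb Q[x_1,\dots,x_n]$ (with $n=\dim X$) and then to produce the regular sequence greedily. Write $W:=dY\subseteq R$ for the finite-dimensional $\mathbb Q$-subspace spanned by the differentials of a basis of $Y$, and let $I:=(W)$ be the ideal of $R$ it generates; the map $u\mapsto du$ carries $Y$ onto $W$. If $du_1,\dots,du_n$ (with $u_i\in Y$) is a regular sequence in $R$, then the $u_i$ are automatically $\mathbb Q$-linearly independent: in a nontrivial relation $\sum c_iu_i=0$, taking $j$ maximal with $c_j\neq0$ would put $du_j$ into the ideal $(du_1,\dots,du_{j-1})$, contradicting that $du_j$ is a non-zero-divisor modulo that (proper) ideal. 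Hence it suffices to find $u_1,\dots,u_n\in Y$ with $du_1,\dots,du_n$ a regular sequence in $R$; completing such a family to a basis of $Y$ then gives the theorem. We may assume $n\geq1$.

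\medskip

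\noindent\emph{Finiteness.} Next I record that $R/I$ is finite-dimensional over $\mathbb Q$. Since $d$ vanishes on $X$, every element of $R$ is a cocycle; and since $d$ lowers word-length in $\Lambda Y$ by exactly one, $\operatorname{im}(d)\cap R$ is precisely the ideal $I$ generated by $dY$. Thus the inclusion $R\hookrightarrow(\Lambda V,d)$ induces an injection $R/I\hookrightarrow H^*(\Lambda V,d)$, and the target is finite-dimensional because $(\Lambda V,d)$ is elliptic. (Equivalently: by the ellipticity criterion recalled above, each basis vector $x_i$ satisfies $[x_i^{N_i}]=0$ for some $N_i$, hence $x_i^{N_i}\in I$, so $R/I$ is a quotient of $\mathbb Q[x_1,\dots,x_n]/(x_1^{N_1},\dots,x_n^{N_n})$.) Moreover $I\subseteq(x_1,\dots,x_n)$ because $dY\subseteq\Lambda^{\geq2}X$ by minimality, so $I$ is proper. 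It follows that $\operatorname{ht}(I)=n$, and since the polynomial ring $R$ is Cohen--Macaulay, $\operatorname{grade}(I,R)=\operatorname{ht}(I)=n$; in particular $\dim Y=\dim W\geq n$ by Krull's height theorem, as the statement requires.

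\medskip

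\noindent\emph{Greedy construction.} Now I build the sequence by induction. Suppose that for some $0\leq k<n$ we have $u_1,\dots,u_k\in Y$ with $du_1,\dots,du_k$ a regular sequence in $R$ (the base case $k=0$ being trivial). Set $R_k:=R/(du_1,\dots,du_k)$. Since $du_1,\dots,du_k$ is an $R$-regular sequence inside $I$, the grade drops by exactly one at each step, so $\operatorname{grade}(I,R_k)=n-k\geq1$; equivalently $I$ lies in none of the finitely many associated primes of $R_k$. Pulling these back to $R$ yields finitely many prime ideals $\mathfrak q_1,\dots,\mathfrak q_r$, none containing $I$, hence --- as $W$ generates $I$ --- none containing $W$; so each $W\cap\mathfrak q_l$ is a proper subspace of $W$. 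As $\mathbb Q$ is infinite, a finite-dimensional $\mathbb Q$-vector space cannot equal a finite union of proper subspaces, so there exists $g\in W$ with $g\notin\mathfrak q_l$ for all $l$; writing $g=du_{k+1}$ with $u_{k+1}\in Y$, the element $du_{k+1}$ is a non-zero-divisor on $R_k$, so $du_1,\dots,du_{k+1}$ is again a regular sequence in $R$. After $n$ steps we obtain the required $u_1,\dots,u_n$, and extending to a basis $u_1,\dots,u_m$ of $Y$ finishes the proof.

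\medskip

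\noindent\emph{Where the difficulty lies.} I expect the third step to be the crux: the classical commutative algebra of the second step only guarantees a regular sequence inside the \emph{ideal} $I$, whereas the theorem demands one inside the small subspace $W=dY$. The passage between the two is the linear form of prime avoidance --- a vector space over the infinite field $\mathbb Q$ is never a finite union of proper subspaces --- applied at each stage to the traces on $W$ of the associated primes of the current quotient, together with the bookkeeping that grade decreases by exactly one when one factors out a regular element. Everything else is either the ellipticity criterion already recalled or standard properties of Cohen--Macaulay polynomial rings (grade equals height, and grade is realized as the common length of maximal regular sequences).
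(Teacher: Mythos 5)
Your proof is correct, and it is essentially the argument behind the result as cited: the paper itself gives no proof of this statement but quotes it from Halperin \cite{H} (see also \cite{F}), whose reasoning is the same commutative-algebra scheme you use --- finite-dimensionality of $\Lambda X/(dY)$ forces the ideal generated by $dY$ to have height, hence grade, equal to $n=\dim X$, and a maximal regular sequence is then extracted greedily from the subspace $dY$ by prime avoidance over the infinite field $\Q$ (a vector space is never a finite union of proper subspaces). This is exactly the step that sacrifices homogeneity of the $u_i$, as the paper stresses when motivating its Theorem \ref{th1.2.2}, so there is nothing to correct.
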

Recall that a pure model $(\Lambda Z,d)$ such that $\dim Z<\infty$ and $\chi_{\pi}(\Lambda Z)=0$ is an $F_0$-model if and only if there exists a (homogeneous) basis $z_1,\dots, z_n$ of $Z^{odd}$ such that $dz_1,\dots, dz_n$ is a regular sequence in $\Lambda Z^{even}$ (\cite[Prop. 32.10]{FHT}). Given a pure elliptic model $(\Lambda V,d)$, the obvious intuition coming from Theorem \ref{th2.1.2} to obtain an $F_0$-basis extension $(\Lambda Z,d)\hookrightarrow (\Lambda V,d)$ with $Z^{even}=V^{even}$, would be to consider $(\Lambda Z,d)=(\Lambda(x_1,\cdots, x_n,u_1,\cdots,u_n),d).$ Unfortunately, since the elements $u_1,\cdots,u_n$ are not necessarily homogeneous, this does not produce in general a well-defined graded differential algebra. We point out that, in the  result above, Halperin used some commutative algebra arguments which do not take in consideration the homogeneity of the elements with respect to the degree. To be clear and to take off any kind of ambiguity about this fact, we consider the following example taken from \cite{AJ}
\begin{example} \label{ex}{\rm
 Let $(\Lambda V,d)=(\Lambda (X\oplus Y),d)=(\Lambda (x_1,x_2,y_1,y_2,y_3 ),d)$ where $|x_1|=6$, $|x_2|=8$, $dy_1=x_1(x_1^4+x_2^3)$, $dy_2=x_2(x_1^4+x_2^3)$ and $dy_3=x_1^3x_2^2$. We will see that there is no $F_0$-basis extension $(\Lambda  Z,d)  \hookrightarrow (\Lambda V,d)$ with $Z^{even}=X=\langle x_1, x_2 \rangle=\mathbb{Q}x_1\oplus \mathbb{Q} x_2$. We note that $|y_1|=29, |y_2|=31$ and $|y_3|=33$. If there were such an extension then $Z^{odd}$ should be a graded subspace of $Y$. This means that we shoud be able to find a (homogeneous) basis \{$u_1,u_2,u_3\}$ of $Y$ such that $Z^{odd}=\langle u_1,u_2 \rangle$. For degree reasons we can suppose that, up to a scalar, $u_1\in \{y_1,y_2,y_3\}$ and $u_2\in \{y_1,y_2,y_3\}\setminus \{u_1\}$. Since $(\Lambda Z,d)$ is an $F_0$-model, $\{du_1,du_2\}$ must be a regular sequence in $\Lambda(x_1,x_2)$. If $u_1=y_1$ then $du_1=x_1(x_1^4+x_2^3)$ is clearly not a zero divisor in $\Lambda(x_1,x_2)$.\\
As shown in the following table which considers the possible values of $u_2$, we can see that $du_2$ is always a zero divisor in the quotient $\Lambda(x_1,x_2)/( du_1)$. 
\medskip
\begin{center}
		\begin{tabular}{|c|c|c|}
			\hline
			$u_2$& $y_2$ & $y_3$\\
			\hline
			In $\Lambda(x_1,x_2)/( du_1)$ & $x_1dy_2=0$ & $(x_1^4+x_2^3)dy_3=0$\\
			\hline
		\end{tabular}
	\end{center}
\medskip

We can then conclude that there is no regular sequence $\{du_1,du_2\}$ where $u_1=y_1$. Similarly, we can verify that if either $u_1=y_2$ or $u_1=y_3$ then we can not find $u_2$ such that $\{du_1,du_2\}$ is a regular sequence in $\Lambda(x_1,x_2)$. Therefore there is no $F_0$-basis extension $(\Lambda Z,d)\hookrightarrow (\Lambda V,d)$ with $Z^{even}=V^{even}$ and any basis $\{u_1,u_2,u_3\}$ provided by Theorem \ref{th2.1.2} is necessarily non-homogeneous. For instance, we can check that $\{u_1=y_3, u_2=y_1+y_2, u_3=y_3\}$ is a basis of $Y$ such that $\{du_1, du_2\}$ is a regular sequence in $\Lambda(x_1,x_2)$ and the element $u_2$ is not homogeneous since $|y_1|\neq|y_2|$.}
\end{example}
Note that the differential in the example above has non-constant length. In this work, we consider $(\Lambda V,d)$ a pure elliptic model and, as stated in Theorem \ref{th1.2.2}, we will prove that there exists an $F_0$-basis extension $(\Lambda Z,d) \hookrightarrow (\Lambda V,d)$ with $Z^{even}=V^{even}$ whenever $d$ is of constant length. In other words, our result ensures the existence of a homogeneous basis in Theorem \ref{th2.1.2} provided that $d$ is of constant length.  

We first set some notations and prove a special case which will be crucial in the proof of the general case.

 Suppose that ${\mathcal B}=\{x_1, \cdots, x_n\}$ is a basis of $X$ satisfying $|x_1|\leq \cdots \leq |x_n|$ and $\{y_1, \cdots , y_m\}$ a basis of $Y$. Let $X_1:= \langle x_k: |x_k|= |x_1| \rangle$ be the vector subspace of $X$ generated by the elements $x_k$ for which $|x_k|=|x_1|$. Similarly, let $Y_1:= \langle y_k:  |y _k| \leq l |x_1| -1 \rangle $ be the vector subspace of $Y$ generated by the elements $y_k$ satisfying $|y_k| \leq l|x_1|-1$. Notice that if $dy_k\neq 0$ then $|y_k|=l|x_1| -1$. For $V_1= X_ 1\oplus Y_1$ we have $dY_1 \subset \Lambda X_1$ and $(\Lambda V_1 ,d)$ is a pure commutative differential graded algebra, called thereafter the first stage of $(\Lambda V,d)$.
\begin{lemma} \label{lm}
Let $(\Lambda V,d)$ be a pure elliptic model where $d$ is a differential of constant length $l$ and let $(\Lambda V_1,d)$ be the first stage of $(\Lambda V,d)$. Then 
\begin{itemize}
\item[(i)] $(\Lambda V_1,d)$ is pure elliptic.
\item[(ii)] There exists an $F_0$-basis extension $(\Lambda E,d) \hookrightarrow  (\Lambda V_1,d)$ with $E^{even}=V_1^{even}$.
\end{itemize}
\end{lemma}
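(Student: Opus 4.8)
The plan is to establish (i) first and then use it, together with Halperin's Theorem~\ref{th2.1.2}, to obtain (ii). For (i): the first stage $(\Lambda V_1,d)$ is pure with $dV_1^{even}=0$ and $dV_1^{odd}\subset\Lambda^l X_1\subset\Lambda^{\geq2}X_1$, hence pure minimal with $\dim V_1\leq\dim V<\infty$, so by the ellipticity criterion recalled above it suffices to check that for each basis element $x_k$ of $X_1=V_1^{even}$ some power $x_k^{N}$ is a coboundary in $(\Lambda V_1,d)$. Since $(\Lambda V,d)$ is elliptic there are $N_k$ and $\xi\in\Lambda V$ with $d\xi=x_k^{N_k}$. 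Decompose $\xi=\sum_{j\geq0}\xi_j$ by word length in $Y$, so that $\xi_j\in\Lambda X\otimes\Lambda^j Y$; purity makes $d$ lower this word length by exactly one, so comparing the word-length-$0$ parts of $d\xi=x_k^{N_k}$ gives $x_k^{N_k}=d\xi_1=\sum_j\pm a_j\,dy_j$, where $\xi_1=\sum_j a_j y_j$ with $a_j\in\Lambda X$. Now apply the algebra projection $\rho\colon\Lambda X\to\Lambda X_1$ killing the generators $x_i$ with $|x_i|>|x_1|$: one has $\rho(x_k^{N_k})=x_k^{N_k}$; for $y_j\in Y_1$ with $dy_j\neq0$ we have $dy_j\in\Lambda^l X_1$, so $\rho(dy_j)=dy_j$; and for $y_j\notin Y_1$ with $dy_j\neq0$ the homogeneous element $dy_j\in\Lambda^l X$ has degree $|y_j|+1>l|x_1|$, whereas $\rho(dy_j)\in\Lambda^l X_1$ is concentrated in degree $l|x_1|$, forcing $\rho(dy_j)=0$ — this is exactly where constant length is used. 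Hence $x_k^{N_k}=\sum_{y_j\in Y_1}\pm\rho(a_j)\,dy_j$ in $\Lambda X_1$, and since $d\rho(a_j)=0$ this exhibits $x_k^{N_k}$ as a coboundary of an element of $\Lambda V_1$, proving (i).

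For (ii): by (i) the first stage $(\Lambda V_1,d)$ is pure elliptic, so Theorem~\ref{th2.1.2} provides a (possibly non-homogeneous) basis $u_1,\dots,u_{m_1}$ of $Y_1$ such that $du_1,\dots,du_r$ is a regular sequence in $\Lambda X_1$, with $r=\dim X_1$. Within the first stage homogeneity comes for free: every $y_k\in Y_1$ with $dy_k\neq0$ has the single degree $l|x_1|-1$, so if $\pi\colon Y_1\to P:=\langle y_k\in Y_1:\ |y_k|=l|x_1|-1\rangle$ is the projection killing the lower-degree generators of $Y_1$, then $d\circ\pi=d$ on $Y_1$. Therefore $\tilde u_i:=\pi(u_i)$ is homogeneous of degree $l|x_1|-1$ with $d\tilde u_i=du_i$ for $i=1,\dots,r$, so $d\tilde u_1,\dots,d\tilde u_r$ is still a regular sequence in $\Lambda X_1$; since a regular sequence in $\Lambda^+X_1$ is linearly independent, so are $\tilde u_1,\dots,\tilde u_r$. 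Put $E=X_1\oplus\langle\tilde u_1,\dots,\tilde u_r\rangle$. Then $E$ is a graded subspace of $V_1$ with $E^{even}=V_1^{even}$, $(\Lambda E,d)$ is a pure minimal sub-CDGA of $(\Lambda V_1,d)$ with $\chi_\pi(\Lambda E)=\dim X_1-r=0$, and $\{\tilde u_1,\dots,\tilde u_r\}$ is a homogeneous basis of $E^{odd}$ whose differentials form a regular sequence in $\Lambda E^{even}$, so $(\Lambda E,d)$ is an $F_0$-model by \cite[Prop.~32.10]{FHT}. Finally, writing $\Lambda V_1=\Lambda E\otimes\Lambda C$ for a complement $C$ of $\langle\tilde u_1,\dots,\tilde u_r\rangle$ in $Y_1$, one has $dC\subset\Lambda X_1\subset\Lambda E$, so $(\Lambda E,d)\hookrightarrow(\Lambda V_1,d)$ is a relative Sullivan model, i.e. the desired $F_0$-basis extension.

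The step I expect to be the main obstacle is (i): one must see that ellipticity is inherited by the first stage, and the delicate point is that the restriction $\rho$ of a chosen primitive of $x_k^{N_k}$ has to kill precisely the terms $dy_j$ coming from $y_j\notin Y_1$ while fixing those coming from $Y_1$. This is guaranteed only because $d$ has constant length $l$, which pins $dy_j$ to degree exactly $l|x_1|$ when $y_j\in Y_1$ and to strictly larger degree otherwise. Granting (i), part (ii) is largely bookkeeping, the one substantive idea being the projection $\pi$ onto the single degree $l|x_1|-1$, which converts the non-homogeneous Halperin basis into a homogeneous one without altering the relevant differentials.
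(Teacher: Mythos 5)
Your proof is correct and follows essentially the same approach as the paper. For (i), both proofs use the same degree argument forced by constant length: a primitive of $x_k^{N_k}$ can be taken in $\Lambda X\otimes \Lambda^1 Y$, and since every generator of $X$ has degree at least $|x_1|$ while $d$ raises word length in $X$ by exactly $l$, the contributing generators must lie in $X_1$ and $Y_1$. The paper shows directly that the primitive already lies in $\Lambda V_1$, whereas you project it there via $\rho$ — a cosmetic difference. For (ii), both proofs exploit the fact that within $Y_1$ all generators with nonzero differential are concentrated in the single degree $l|x_1|-1$. The paper first strips off the closed generators $T$ and applies Halperin's theorem to $(\Lambda(X_1\oplus R),d)$, where $R$ is concentrated in one degree, so the resulting basis is automatically homogeneous; you instead apply Halperin's theorem to all of $(\Lambda V_1,d)$ and then homogenize the basis by the projection $\pi$ onto the top-degree part $P$, using $d\circ\pi=d$ on $Y_1$. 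Both routes use the same single-degree observation and land on the same $F_0$-model $(\Lambda E,d)$.
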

\begin{proof}
(i) Let $x_k \in {\mathcal B}$ such that $|x_k|=|x_1|$. Since $(\Lambda V,d)$ is an elliptic model then there exists $N_k \in \mathbb{N}\setminus\{0\}$ satisfying $x_k^{N_k}=dP_k$ for some $P_k \in \Lambda V$. Furthermore, since $(\Lambda V,d)$ is pure and $d$ is of constant length $l$, $P_k$ can be written as $ \sum \limits _j m_j\cdot y_j$ where $m_j \in \Lambda ^{N_k-l} X$ and $dy_j\neq 0$ for each $j$. As $|x_1|$ is the lowest degree, we have $|m_j| \geq (N_k-l)|x_1|$ and  $|d y_j | \geq l|x_1|$. Since on the first hand $|dP_k|=N_k|x_k|=N_k |x_1|$ and, on the other hand, $|dP_k|=|m_jdy_j|$ for any $j$ we must have $|m_j|= (N_k -l)|x_1|$ and $|dy_j|=l |x_1|$. Therefore $m_j\in \Lambda X_1$ and $ y_j \in Y _1$ for any  $j$ and then $P_k \in \Lambda V_1.$ This shows that $[x_k^{N_k}]=0$ in $H^*(\Lambda V_1,d)$ and consequently $(\Lambda V_1,d)$ is elliptic.\\
		(ii) We consider 
		\begin{equation*}
			\begin{cases}
				R= \langle y_k \in Y_1: dy_k\neq 0 \rangle, \\
				T= \langle y_k \in Y_1: dy_k= 0\rangle. 
			\end{cases}
		\end{equation*}
 We clearly see that
		\begin{itemize}
			\item[•]  $(\Lambda X_1 \otimes \Lambda R,d)$ is pure,
			\item[•] The elements of $R$ are of the same degree.
		\end{itemize} 
		Moreover, $(\Lambda V_1,d)= (\Lambda(X_1\oplus R),d)\otimes (\Lambda T,0)$.
		From $(i)$ we know that $(\Lambda V_1,d)$ is elliptic, hence so is $(\Lambda (X_1\oplus R),d)$. By Theorem \ref{th2.1.2} applied to $(\Lambda (X_1 \oplus R),d)$, there exists a (a priori not necessarily homogeneous) basis  $u_1, \cdots , u_p, \cdots , u_q$ of $R$ such that $du_1, \cdots , du_p$ is a regular sequence in $\Lambda X_1$ and $p=\dim X_1$. Since all the elements of $R$ have the same degree we can assert that this basis is necessarily homogeneous. In other words, we can decompose $R$ as 
		$$ R=R_1\oplus R_2$$
		where $R_1$ and $R_2$ are two vector subspaces of $R$ such that $\dim R_1=\dim X_1$ and $\Lambda(X_1\oplus R_1),d)$ is an $F_0$-model. Setting $E=X_1\oplus R_1$ we obtain an $F_0$-basis extension $(\Lambda E,d)\hookrightarrow (\Lambda V_1,d)$ with $E^{even}=V_1^{even}$.
	\end{proof}
\begin{remark} \label{rk}
If $(\Lambda V,d)$ is an elliptic pure minimal model with $V^{even}$ concentrated in a single degree then, Jessup in \cite[Lemma 3.3]{J} proved that there always exists an $F_0$-basis extension $(\Lambda Z,d)\hookrightarrow (\Lambda V,d)$ with $ V^{even}=Z^{even}$. Our Lemma \ref{lm} above recovers this result in the particular case where $d$ is a differential of constant length.
\end{remark}
We are now ready to prove our structure theorem, namely Theorem \ref{th1.2.2} from the introduction.	
\begin{proof}[Proof of Theorem \ref{th1.2.2}]
We proceed by induction on $n=\dim V^{even}$. For $n=1$ the result is obvious. By induction, we suppose that for any pure elliptic model $(\Lambda V,d)$ with $\dim V^{even} \leq n-1$ and $d$ a differential of constant length $l$, there exists an $F_0$-basis extension $$(\Lambda Z,d) \hookrightarrow (\Lambda V,d)$$
satisfying $Z^{even}= V^{even}$. Let $(\Lambda V,d)=(\Lambda(x_1,\cdots, x_{n}, y_1,\cdots, y_m),d)$ be a pure elliptic model with $d$ a differential of constant length $l$ and $\dim V^{even}=n$. By Lemma \ref{lm} there exists an extension $$(\Lambda E,d) \hookrightarrow (\Lambda V,d)$$ where $(\Lambda E,d)$ is an $F_0$-model and $\dim E>0$. Here, without loss of generality, we may suppose that $(\Lambda E,d)$ has the form $(\Lambda E,d)=(\Lambda (x_1,\cdots, x_p,y_1,\cdots, y_p),d)$ where $p\geq 1$. We now consider the following fibration $$(\Lambda E,d) \rightarrow(\Lambda V,d)\rightarrow (\Lambda W,\bar{d}):=(\Lambda (x_{p+1},\cdots, x_n,y_{p+1},\cdots, y_m, \bar{d}).$$
		As $(\Lambda V,d)\rightarrow (\Lambda W, \bar{d})$ is a surjective morphism and $(\Lambda V,d)$ is a pure elliptic minimal model with differential of constant length $l$, so is $(\Lambda W, \bar{d})$. Since $\dim W^{even}<n$, we next use the induction hypothesis on $(\Lambda W, \bar{d})$ to ensure the existence of an $F_0$-basis extension
		$$ (\Lambda (x_{p+1},\cdots, x_n,u_{p+1},\cdots,u _{n}), \bar{d}) \hookrightarrow (\Lambda W, \bar{d})=(\Lambda(x_{p+1},\cdots, x_n,y_{p+1},\cdots, y_{m}), \bar{d})$$	
		where $\langle u_{p+1},\cdots,u _{n}\rangle$, the vector space generated by $u_{p+1},\cdots,u _{n}$, is a graded subspace of $\langle y_{p+1},\cdots,y _{m}\rangle$. 
		
		  Let $U=\langle y_1,\cdots, y_p,u_{p+1}, \cdots, u_n  \rangle \subset Y$ be the vector subspace of $Y$ generated by $\{y_1,\cdots, y_p,u_{p+1}, \cdots, u_n \}$ and let $(\Lambda Z,d):=(\Lambda(X\oplus U),d)\subset (\Lambda V,d)$. It is clear that we have an extension $(\Lambda Z,d)\hookrightarrow (\Lambda V,d)$ where $Z^{even}=V^{even}$ and $\chi_{\pi}(\Lambda Z)=0$. In order to prove that this is an $F_0$-basis extension it remains to show that $(\Lambda Z,d)$ is elliptic.  Since $(\Lambda E,d)$ is an elliptic subalgebra of  $(\Lambda Z,d)$, we already know that, for $1\leq i\leq p$, there exist $M_i\in \mathbb{N} $ and $\xi_i\in \Lambda Z$ such that $d\xi_i=x_i^{M_i}$.
		We will now see that the same is true for any $i\in \{p+1,\cdots, n\}$. 
		
		Let us fix $i\in\{ p+1,\cdots, n\}$. It follows from the ellipticity and pureness of 
		$$ (\Lambda (x_{p+1},\cdots, x_n,u_{p+1},\cdots,u _{n}), \bar{d})$$ that there exists an integer $N_i \in \mathbb{N}$ satisfying
	   $$\bar{d}(v_i)=x_i^{N_i} ,\quad \text{ for some } \quad v_i \in \Lambda (x_{p+1},\cdots, x_n)\otimes \Lambda ^1(u_{p+1},\cdots,u _{n}).$$ 
	   As $\Lambda (x_{p+1},\cdots, x_n)\otimes \Lambda ^1(u_{p+1},\cdots,u _{n})\subset \Lambda X\otimes \Lambda^1Y$, we may look at $v_i$ as an element of $\Lambda X\otimes \Lambda^1 Y$ so that we have in the algebra $(\Lambda V,d)$ 
		\begin{equation}\label{gamma}
		 dv_i=x_i^{N_i}+\gamma_i\quad \text{ where } \gamma_i \in \Lambda^+(x_1,\cdots, x_p) \otimes \Lambda (x_{p+1},\cdots, x_n).
		 \end{equation}
%
     In what follows we express the element $\gamma_i$  from (\ref{gamma}) as an element of $$\Lambda ^+(x_1,\cdots,x_p)\otimes  \Lambda(x_i)\otimes \Lambda(x_{p+1}, \cdots,\hat{x}_i,\cdots ,x_n).$$ As usual the notation `` $\hat{}$ ''  means that the corresponding component is omitted. Explicitly we write
      $$\gamma_i= \sum \limits_{(K,k)} \alpha^k _Kx_i ^k\cdot x ^K_{\langle \mathbf{p};i\rangle}$$ 
    where $K=(k_{p+1},\cdots,{k}_{i-1}, {k}_{i+1},\cdots ,k_n) \in \mathbb{N}^{n-p-1}$, $k\geq 0$, 
    $$x ^K_{\langle \mathbf{p};i \rangle}= x_{p+1}^{k_{p+1}} \cdot x_{p+2}^{k_{p+2}} \cdots {x} _{i-1}^{k_{i-1}}\cdot   {x} _{i+1}^{k_{i+1}}\cdots x_{n}^{k_{n}}$$
    and $\alpha ^k_K \in \Lambda^{\geq 1}(x_1,\cdots, x_p)$ is the coefficient of the monomial $x_i ^k\cdot x ^K_{\langle \mathbf{p};i\rangle}$.
    In the notation $x ^K_{\langle \mathbf{p};i \rangle}$, the subscript $\langle \mathbf{p};i\rangle$ means that the factors $x_1,\dots,x_p$ and $x_i$ are omitted. Formula (\ref{gamma}) can then be written as follows:
    $$dv_i=x_i^{N_i}+\sum \limits_{(K,k)} \alpha^k _Kx_i ^{k}\cdot x ^K_{\langle \mathbf{p} ;i\rangle}.$$
	Note that, for degree reasons, there are only a finite number of pairs $(K,k)$ for which $\alpha_K^k\neq 0.$
	
	For any integer $m_i\in \mathbb{N}$, we then have $$d(x_i^{m_i}v_i)=x_i^{N_i+m_i}+\sum \limits_{(K,k)} \alpha^k _{K}x_i ^{k+m_i}\cdot x ^K_{\langle \mathbf{p}; i \rangle}.$$
		From this calculation, we will use the following iterative process. In the first step, we consider the elements $\alpha^k_Kx_i ^{k+m_i}\cdot x ^K_{\langle \mathbf{p};i \rangle}$. Assuming that $m_i$ is sufficiently large (here $m_i\geq N_i$), we have
	$$d(\alpha^k _Kx_i ^{k+m_i-N_i}\cdot x ^K_{\langle \mathbf{p}; i \rangle} v_i)= \alpha^k _Kx_i ^{k+m_i}\cdot x ^K_{\langle \mathbf{p};i\rangle} + \sum \limits_{(K',k')} \alpha^k _K\alpha^{k'} _{K'}x_i ^{k+m_i-N_i+k'}\cdot x ^{K+K'}_{\langle \mathbf{p};i \rangle}$$
	where as before $K'\in \mathbb{N}^{n-p-1}$ and $K+K'$ is the usual component by component sum. Therefore
	$$d(x_i^{m_i}v_i-\sum_{(K,k)}\alpha^k _{K}x_i ^{k+m_i-N_i}\cdot x ^K_{\langle \mathbf{p};i \rangle} v_i)=x_i^{N_i+m_i}-\sum \limits_{(K,k)}\sum \limits_{(K',k')} \alpha^k _K\alpha^{k'} _{K'}x_i ^{k+m_i-N_i+k'}\cdot x ^{K+K'}_{\langle \mathbf{p} ;i\rangle}.$$
		Remark that in this first step, we have $\alpha^k _{K}\alpha^{k'} _{K'}\in \Lambda ^{\geq 2}(x_1,\cdots,x_p)$.
		
		As a second step we consider the elements $\alpha^k _{K}\alpha^{k'} _{K'}x_i ^{k+m_i-N_i+k'}\cdot x ^{K+K'}_{\langle \mathbf{p};i \rangle}$. Again, assuming that $m_i$ is sufficiently large (which is possible because there exist only a finite number of relevant sequences $K,K'$), we can do the following second iteration:
		\begin{eqnarray*}
	d\left(\alpha^k _{K}\alpha^{k'} _{K'}x_i ^{k+m_i-2N_i+k'}\cdot x ^{K+K'}_{\langle \mathbf{p} ;i\rangle}v_i\right)&=& \alpha^k _{K}\alpha^{k'} _{K'}x_i ^{k+m_i-N_i+k'}\cdot x ^{K+K'}_{\langle \mathbf{p};i \rangle}v_i\\
			&+&  \sum \limits_{(K'',k'')} \alpha^k _{K}\alpha^{k'} _{K'}\alpha^{k''} _{K''}x_i ^{k+m_i-2N_i+k'+k''}\cdot x ^{K+K'+K''}_{\langle \mathbf{p};i \rangle}. 
		\end{eqnarray*}
		We thus have
\begin{multline*}
	\!\!\!\!d\left(x_i^{m_i}v_i-\!\sum\limits _{(K,k)}\!\alpha^k _{K}x_i ^{k+m_i-N_i}\cdot x ^K_{\langle \mathbf{p};i \rangle} v_i + \!\sum \limits_{(K,k)}\!\sum \limits_{(K',k')}\!\alpha^k _{K}\alpha^{k'} _{K'}x_i ^{k+m_i-2N_i+k'}\cdot x ^{K+K'}_{\langle \mathbf{p};i \rangle}v_i\right)= x_i^{N_i+m_i}\\ +\sum \limits_{(K,k)}\sum \limits_{(K',k')}\sum \limits_{(K'',k'')} \alpha^k _{K}\alpha^{k'} _{K'}\alpha^{k''} _{K''}x_i ^{k+m_i-2N_i+k'+k''}\cdot x ^{K+K'+K''}_{\langle \mathbf{p};i \rangle}.
		\end{multline*}
		Now, in this second iteration we have $\alpha^k _{K}\alpha^{k'} _{K'}\alpha^{k''} _{K''} \in \Lambda ^{\geq 3}(x_1,\cdots, x_p)$ and with $m_i$ sufficiently large we can reiterate the same process as many times as we want. After $s$ iterations, we can reformulate the obtained expression as
		\begin{equation}d\left(x_i^{m_i}v_i+\sum _{(J,j)} \tilde{\alpha}_{J}^j x_i^j\cdot x_{\langle \mathbf{p};i \rangle}^Jv_i \right) = x_i^{N_i+m_i}+\sum_{(H,h)} \tilde{\beta}_H^h x_i^h\cdot x_{\langle \mathbf{p};i\rangle}^H \label{eq}
		\end{equation}
		where $J,H\in \mathbb{N}^{n-p-1}$, $j,h\geq 0$, $\tilde{\alpha}_J^j\in \Lambda(x_1,\cdots, x_p)$, and $\tilde{\beta}_H^h\in \Lambda^{>s}(x_{1},\cdots, x_p)$. Since $(\Lambda E,d)=(\Lambda(x_1,\cdots, x_p,y_1,\cdots,y_p),d)$ is elliptic, we choose $s\geq f$ where $f$ is its formal dimension. We then have $[\tilde{\beta}_H^h]=0$ in $H(\Lambda E,d)$, that is $\tilde{\beta}_H^h=db_H^h$ with $b_H^h\in\Lambda E$. Consequently the equation (\ref{eq}) implies $$x_i^{N_i+m_i}=d\xi_i$$ 
		where $$\xi_i=x_i^{m_i}v_i+\sum _{(J,j)} \tilde{\alpha}_J^j x_i^j\cdot x_{\langle \mathbf{p};i \rangle}^Jv_i-\sum_{(H,h)} b_H^h x_i^h\cdot x_{\langle \mathbf{p};i \rangle}^H\in \Lambda(x_1,\cdots\!, x_n,y_1,\cdots\!, y_p, u_{p+1},\cdots\!,u _{n}).$$ 
	As we can do the process above for any $i\in\{p+1,\dots, n\}$, we conclude that, 
		  for any $i\in\{p+1,\dots, n\}$, there exist $M_i \in \mathbb{N}$ and $\xi_i\in \Lambda Z$ such that $x_i^{M_i}=d\xi_i$, which completes the proof.
	\end{proof}
		 

\section{Application to rational topological complexity} \label{S32}
	We now use Theorem \ref{th1.2.2} to obtain an upper bound for the rational topological complexity of certain elliptic spaces. We will use the notation $\TC(\Lambda V)$ instead of $\TC_0(S)$  and $\cat(\Lambda V)$ instead of $\cat_0(S)$ where  $(\Lambda V,d)$ is a minimal Sullivan model of $S$. With this notations, Theorem \ref{th1.1.2} from the introduction can be written as
	\begin{theorem}[Theorem \ref{th1.1.2}] \label{th2} Let $(\Lambda V,d)$ be a pure elliptic model with $d$ a differential of constant length. Then
		$$\TC(\Lambda V) \leq 2\cat(\Lambda V)+\chi_{\pi}(\Lambda V).$$
	\end{theorem}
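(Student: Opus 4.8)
The plan is to use the structure theorem (Theorem \ref{th1.2.2}) to reduce the estimate on $\TC$ to the already-established case, namely \cite[Theorem B]{HRV}. By Theorem \ref{th1.2.2}, since $(\Lambda V,d)$ is a pure elliptic model with differential of constant length, there exists an $F_0$-basis extension $(\Lambda Z,d)\hookrightarrow(\Lambda V,d)$ with $Z^{even}=V^{even}$. This is precisely the extra hypothesis under which the bound $\TC(\Lambda V)\le 2\cat(\Lambda V)+\chi_\pi(\Lambda V)$ was proven in \cite[Theorem B]{HRV}. So the argument is essentially a one-line invocation: Theorem \ref{th1.2.2} supplies the missing ingredient, and \cite[Theorem B]{HRV} then gives the conclusion.

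First I would recall, for the reader's convenience, how the $F_0$-basis extension is used to bound $\TC$: one combines the standard upper bound $\TC(\Lambda V)\le \cat(\Lambda V\otimes\Lambda V)$ (or the module-theoretic refinement via $\MTC$) with an analysis of the extension $(\Lambda Z,d)\hookrightarrow(\Lambda V,d)$, whose fibre $(\Lambda(V/Z),\bar d)$ is a product of odd spheres (each of the $\dim V^{odd}-\dim V^{even}=-\chi_\pi(\Lambda V)$ generators contributing $1$ to the category of the fibre). The $F_0$ base $(\Lambda Z,d)$ has $\cat(\Lambda Z)=\cat(\Lambda Z\otimes\Lambda Z)/2$-type behaviour built from a Poincaré duality/pure argument, and the fibre's exterior generators add at most $\dim V^{odd}-\dim V^{even}$ to the relevant LS-category of the product model. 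Assembling these, one recovers the bound $2\cat(\Lambda V)+\chi_\pi(\Lambda V)$. Since all of this is carried out in \cite{HRV} under exactly the hypothesis ``there exists an $F_0$-basis extension with $Z^{even}=V^{even}$'', I would not reproduce it, but rather cite it.

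Therefore the proof reduces to: apply Theorem \ref{th1.2.2} to obtain an $F_0$-basis extension $(\Lambda Z,d)\hookrightarrow(\Lambda V,d)$ with $Z^{even}=V^{even}$; then apply \cite[Theorem B]{HRV} to conclude $\TC(\Lambda V)\le 2\cat(\Lambda V)+\chi_\pi(\Lambda V)$. The only point requiring care is to make sure the statement of \cite[Theorem B]{HRV} is invoked with its hypotheses exactly matched — that $(\Lambda V,d)$ is pure elliptic, that $d$ has constant length (still needed, e.g.\ for the $\cat$ computations in \cite{HRV} even though it is now also used to produce the extension via Theorem \ref{th1.2.2}), and that the $F_0$-basis extension has $Z^{even}=V^{even}$.

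**Main obstacle.** There is essentially no obstacle left at this stage: the genuine difficulty of the paper has been absorbed into Theorem \ref{th1.2.2}, whose proof (the inductive regular-sequence lifting argument) is the technical heart. The only thing to verify is that \cite[Theorem B]{HRV} was stated in a form that takes the existence of such an extension as a hypothesis rather than deriving it from a stronger assumption; granting that, Theorem \ref{th2} is immediate. If instead \cite[Theorem B]{HRV} bundled the existence of the extension together with ``$d$ of constant length'' as a single hypothesis, one would simply re-examine its proof to confirm that only the extension (plus constant length, where used for the category estimates) is actually invoked — a routine check, not a new argument.
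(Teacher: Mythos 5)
Your proposal is correct and matches the paper's proof exactly: the paper proves Theorem~\ref{th2} by combining Theorem~\ref{th1.2.2} (to produce the $F_0$-basis extension with $Z^{even}=V^{even}$) with \cite[Th.~4.2]{HRV}, the model-theoretic version of \cite[Th.~B]{HRV}, which was indeed stated with the existence of such an extension as an explicit hypothesis alongside constant length and pure ellipticity. Your speculative middle paragraph about how \cite{HRV} internally uses the extension is not needed (and not vouched for here), but since you correctly identified that the proof is just this two-step citation, the proposal is sound.
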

	\begin{proof}
		This follows from Theorem \ref{th1.2.2} and \cite[Th. 4.2]{HRV} (which is the version in terms of models of \cite[Th. B]{HRV}).
	\end{proof}
In particular, if $(\Lambda V,d)$ is coformal then we have the following corollary
\begin{corollary} \label{corollary}
Let $(\Lambda V,d)$ be a pure elliptic coformal minimal model. Then $$\TC(\Lambda V) \leq \dim V.$$
\end{corollary}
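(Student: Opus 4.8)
The plan is to feed Theorem~\ref{th2} with the observation that coformality forces $d$ to have constant length $2$, and then to extract the rest from the structure theorem. Since $(\Lambda V,d)$ is coformal, $dV\subset\Lambda^{2}V$, so the hypothesis of Theorem~\ref{th2} holds with $l=2$ and $\TC(\Lambda V)\le 2\cat(\Lambda V)+\chi_{\pi}(\Lambda V)$. As $\chi_{\pi}(\Lambda V)=\dim V^{even}-\dim V^{odd}$, the bound $\TC(\Lambda V)\le\dim V=\dim V^{even}+\dim V^{odd}$ will follow once we prove
\begin{equation*}
\cat(\Lambda V)\le\dim V^{odd}.
\end{equation*}
So the whole statement reduces to this category estimate.

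To obtain it I would invoke Theorem~\ref{th1.2.2}: fix an $F_0$-basis extension $(\Lambda Z,d)\hookrightarrow(\Lambda V,d)$ with $Z^{even}=V^{even}$. As noted right after Theorem~\ref{th1.2.2}, this presents $(\Lambda V,d)$ as the model of the total space of a fibration whose base is the $F_0$-space modelled by $(\Lambda Z,d)$ and whose fibre is a product of $r:=\dim V^{odd}-\dim V^{even}$ odd-dimensional spheres: modulo $\Lambda^{+}Z$ each generator of $V^{odd}$ not lying in $Z^{odd}$ has zero differential, its differential sitting in $\Lambda^{2}V^{even}=\Lambda^{2}Z^{even}\subset\Lambda^{+}Z$. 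Since adjoining an odd-dimensional sphere raises the rational Lusternik--Schnirelmann category by at most one, iterating over these $r$ generators gives
\begin{equation*}
\cat(\Lambda V)\le\cat(\Lambda Z)+r .
\end{equation*}

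It then remains to check $\cat(\Lambda Z)\le n$, where $n:=\dim Z^{even}=\dim V^{even}$, and here coformality is essential. Let $x_{1},\dots,x_{n}$ be a basis of $Z^{even}$ and $z_{1},\dots,z_{n}$ a basis of $Z^{odd}$. Being an $F_0$-model, $(\Lambda Z,d)$ satisfies: $q_{1}:=dz_{1},\dots,q_{n}:=dz_{n}$ is a regular sequence in $\Q[x_{1},\dots,x_{n}]$ and $H^{*}(\Lambda Z,d)\cong\Q[x_{1},\dots,x_{n}]/(q_{1},\dots,q_{n})$; and being coformal it forces each $q_{i}$ to be homogeneous of word-length $2$ in the $x_{j}$. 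Consequently the Hilbert series of $H^{*}(\Lambda Z,d)$ for the word-length grading is $(1+t)^{n}$: the algebra vanishes in word-length $>n$, while its word-length-$n$ part is nonzero and spanned by monomials of word-length $n$ in the $x_{j}$, hence contains a nonzero $n$-fold product of positive-degree classes. Thus the cup-length of $H^{*}(\Lambda Z,d)$ equals $n$, and since the rational category of an $F_0$-space coincides with its cup-length, $\cat(\Lambda Z)=n$. Substituting into the previous display, $\cat(\Lambda V)\le n+r=\dim V^{odd}$, and therefore $\TC(\Lambda V)\le 2\dim V^{odd}+(\dim V^{even}-\dim V^{odd})=\dim V$.

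The reduction through Theorem~\ref{th2} and the closing arithmetic are routine; the content is the estimate $\cat(\Lambda V)\le\dim V^{odd}$. It rests on two inputs that should be cited carefully: that a single odd-dimensional sphere fibre costs at most one in rational category (applied $r$ times to the, a priori nontrivial, fibration furnished by Theorem~\ref{th1.2.2}), and that the rational category of an $F_0$-space equals its cup-length. I expect this last fact to be the main obstacle to a fully self-contained write-up; everything else is bookkeeping.
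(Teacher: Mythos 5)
Your proof is correct, but it takes a more roundabout route than the paper's. The paper's entire argument is: invoke Theorem~\ref{th2} and then cite Félix--Halperin directly for the identity $\cat(\Lambda V)=\dim V^{odd}$ valid for any coformal elliptic model, after which the arithmetic $2\dim V^{odd}+(\dim V^{even}-\dim V^{odd})=\dim V$ closes the proof. You instead avoid that citation and rederive the (one-sided) estimate $\cat(\Lambda V)\le\dim V^{odd}$ from the structure theorem: decompose via the $F_0$-basis extension furnished by Theorem~\ref{th1.2.2}, pay $\dim V^{odd}-\dim V^{even}$ for the odd-sphere fiber generators, and identify $\cat(\Lambda Z)$ with the cup-length $n$ of the complete intersection $\Q[x_1,\dots,x_n]/(q_1,\dots,q_n)$ with quadratic $q_i$. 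This is internally consistent, and it is a nice illustration that the structure theorem alone can recover the needed category bound in the coformal case. However, it is not more elementary: you trade the single Félix--Halperin citation for two facts of comparable depth --- that an odd-sphere KS-extension raises $\cat_0$ by at most one, and that for $F_0$-spaces (which are formal Poincaré duality spaces) rational category coincides with rational cup-length --- so the apparent self-containedness is only partial, as you yourself flag. The paper's proof is therefore shorter, whereas yours highlights a reduction-to-$F_0$ mechanism; both are valid.
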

\begin{proof}
The result follows directly from the previous theorem and the well-known fact due to Félix and Halperin that the rational LS-category of an elliptic coformal minimal model $(\Lambda V,d)$ satisfies $\cat(\Lambda V)=\dim V^{odd}$ \cite{FH}.
\end{proof}
	We may extend the result obtained above to a particular case of non-pure elliptic minimal models. More precisely :
	\begin{theorem}
		Let $(\Lambda V,d)$ be an elliptic minimal model with $d$ a differential of constant length. If there exists an extension $(\Lambda Z,d) \hookrightarrow (\Lambda V,d)$ where $(\Lambda Z,d)$ is a pure elliptic algebra satisfying $Z^{even}=V^{even}$ then
		$$\TC( \Lambda V) \leq 2\cat(\Lambda V)+\chi_{\pi}(\Lambda V).$$
	\end{theorem}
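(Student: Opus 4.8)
The plan is to reduce the statement to \cite[Th.~4.2]{HRV}, exactly as Theorem~\ref{th2} was deduced above, the one new ingredient being that the given pure elliptic extension must first be upgraded to an $F_0$-basis extension. Recall that \cite[Th.~4.2]{HRV} yields the inequality $\TC(\Lambda V)\leq 2\cat(\Lambda V)+\chi_{\pi}(\Lambda V)$ for an elliptic minimal model with differential of constant length as soon as it carries an $F_0$-basis extension $(\Lambda Z',d)\hookrightarrow(\Lambda V,d)$ with ${Z'}^{even}=V^{even}$; purity of $(\Lambda V,d)$ enters Theorem~\ref{th1.1.2} only to \emph{guarantee}, via Theorem~\ref{th1.2.2}, that such an extension exists. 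So the task reduces to producing such an $F_0$-basis extension from the hypothesis.

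First I would check that the restriction of $d$ to the subalgebra $(\Lambda Z,d)$ is still a differential of constant length: if $z\in Z\subset V$ then $dz\in\Lambda^{l}V$ because $d$ has constant length $l$ on $\Lambda V$, while $dz\in\Lambda Z$ because $\Lambda Z$ is $d$-stable, whence $dz\in\Lambda^{l}V\cap\Lambda Z=\Lambda^{l}Z$. Since $l\geq 2$, this also makes $(\Lambda Z,d)$ minimal, so $(\Lambda Z,d)$ is a pure elliptic minimal model with differential of constant length and Theorem~\ref{th1.2.2} applies to it: it produces an $F_0$-basis extension $(\Lambda Z',d)\hookrightarrow(\Lambda Z,d)$ with ${Z'}^{even}=Z^{even}=V^{even}$. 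Now $Z'$ is a graded subspace of $Z$, hence of $V$, and the composite $(\Lambda Z',d)\hookrightarrow(\Lambda Z,d)\hookrightarrow(\Lambda V,d)$ is again a relative Sullivan model with $(\Lambda Z',d)$ an $F_0$-model; thus $(\Lambda Z',d)\hookrightarrow(\Lambda V,d)$ is an $F_0$-basis extension with ${Z'}^{even}=V^{even}$, and feeding it into \cite[Th.~4.2]{HRV} finishes the argument.

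The step I would expect to require the most care is the invocation of \cite[Th.~4.2]{HRV}: one must be certain that it — or rather its proof — does not use purity of the whole model $(\Lambda V,d)$, only the existence of an $F_0$-basis extension with ${Z'}^{even}=V^{even}$. Should purity genuinely be needed there, one would instead re-run that argument in the present setting, the point being that the extra odd generators $V^{odd}/{Z'}^{odd}$ contribute like a product-of-odd-spheres factor and do not disturb the estimate. The constant-length hypothesis is what makes everything work — it is precisely what allows Theorem~\ref{th1.2.2} to be applied to $(\Lambda Z,d)$ — and Example~\ref{ex} shows it cannot simply be dropped, since a pure elliptic model with non-constant-length differential need not admit any $F_0$-basis extension with $Z^{even}=V^{even}$. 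Finally, the assumption that $(\Lambda V,d)$ possesses a pure elliptic extension $(\Lambda Z,d)$ with $Z^{even}=V^{even}$ is exactly the substitute for purity of the whole model that keeps the scheme running.
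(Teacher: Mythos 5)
Your proposal diverges from the paper's proof and, as you yourself flag, has a gap precisely at the decisive step. The paper does \emph{not} invoke \cite[Th.~4.2]{HRV} on the (non-pure) model $(\Lambda V,d)$: it applies Theorem~\ref{th2} only to the pure sub-model $(\Lambda Z,d)$ to get $\TC(\Lambda Z)\leq 2\cat(\Lambda Z)+\chi_{\pi}(\Lambda Z)$, bounds $\TC(\Lambda V)\leq \TC(\Lambda Z)+\dim U$ via \cite[Cor.~3.4]{HRV} (writing $\Lambda V=\Lambda(Z\oplus U)$ with $U\subset V^{odd}$), and then uses the Lechuga--Murillo formula $\cat(\Lambda Z)=\dim Z^{odd}+\dim Z^{even}(l-2)$ and $\cat(\Lambda V)=\dim V^{odd}+\dim V^{even}(l-2)$ to translate the estimate from $\Lambda Z$ to $\Lambda V$ by elementary counting. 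The constant-length hypothesis is used twice in this scheme: once to run Theorem~\ref{th2} on $(\Lambda Z,d)$ and once to have the Lechuga--Murillo formula available for both $\Lambda Z$ and $\Lambda V$.

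Your shortcut, by contrast, rests on the claim that \cite[Th.~4.2]{HRV} applies to an elliptic model with constant-length differential without assuming purity of the whole model, only the existence of an $F_0$-basis extension with ${Z'}^{even}=V^{even}$. That claim is unjustified and is almost certainly false as stated: the very notion of $F_0$-basis extension is set up in this paper only for \emph{pure} elliptic models, and \cite[Th.~4.2]{HRV} is described as the model-level version of a theorem (\cite[Th.~B]{HRV}) about pure models; indeed, if your reading were correct, the present theorem would be an immediate corollary of \cite[Th.~4.2]{HRV} plus Theorem~\ref{th1.2.2}, and the authors would have had no reason to route the argument through \cite[Cor.~3.4]{HRV} and the Lechuga--Murillo formula. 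Your parenthetical ``one would instead re-run that argument in the present setting'' is where the real content lies, and that re-running is not sketched; it is precisely what the paper's proof supplies, via the product bound and the LS-category computation. The preliminary observations you do make are fine: $d$ restricts to a constant-length differential on $\Lambda Z$ since $\Lambda^{l}V\cap\Lambda Z=\Lambda^{l}Z$, so Theorem~\ref{th1.2.2} applies to $(\Lambda Z,d)$; but you then need the paper's bookkeeping, not a direct appeal to \cite[Th.~4.2]{HRV}, to pass from $\Lambda Z$ to $\Lambda V$.
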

	\begin{proof}
		Under the conditions of the theorem, we can suppose that $\Lambda V=\Lambda (Z\oplus U)$ where $U$ is a vector subspace of $V^{odd}$. Then, by \cite[Cor. 3.4]{HRV}, we have \begin{eqnarray*}
			\TC ( \Lambda V)&=& \TC( \Lambda (Z\oplus U))\\
			&\leq & \TC( \Lambda Z) + \dim U.
		\end{eqnarray*}
		As $(\Lambda Z,d)$ is an elliptic pure minimal model of differential $d$ of constant length, Theorem \ref{th2} yields $\TC(\Lambda Z)\leq 2\cat(\Lambda Z)+\chi_{\pi}(\Lambda Z)$ and consequently 
		\begin{eqnarray*}
			\TC ( \Lambda V)&\leq& 2\cat(\Lambda Z)+\chi_{\pi}(\Lambda Z)+ \dim U.
		\end{eqnarray*}
		On the other hand, from the Lechuga-Murillo formula established in \cite{LM} (see also \cite{L}), we have $\cat(\Lambda Z)=\dim Z^{odd}+2\dim Z^{even}(l-2)$ where $l$ is the length of the differential. As $\chi_{\pi}(\Lambda Z)= \dim Z^{even}-\dim Z^{odd}$ we then have :
		\begin{eqnarray*}
			\TC ( \Lambda V)&\leq& 2\cat(\Lambda Z)+\chi_{\pi}(\Lambda Z)+ \dim U\\
			&\leq& 2(\dim Z^{odd}+\dim Z^{even}(l-2)) +\dim Z^{even}-\dim Z^{odd} + \dim U\\
			&\leq& 2(\dim U+\dim Z^{odd}+\dim Z^{even}(l-2)) +\dim Z^{even}-\dim Z^{odd}\\
			& +& \dim U - 2\dim U.
		\end{eqnarray*}
		Moreover, we have $\dim V^{odd}=\dim Z^{odd}+\dim U$ and $\dim Z^{even}=\dim V^{even}$. We hence have:
		\begin{eqnarray*}
			\TC ( \Lambda V)&\leq& 2(\dim V^{odd}+ \dim V^{even}(l-2)) + \dim V^{even}-\dim Z^{odd}-\dim U\\
			&\leq& 2(\dim V^{odd}+ \dim V^{even}(l-2)) + \dim V^{even}-\dim V^{odd}.	\end{eqnarray*}
		As we have $\cat (\Lambda V)=\dim V^{odd}+ \dim V^{even}(l-2)$ and $\chi_{\pi}( \Lambda V)=\dim V^{even}-\dim V^{odd},$ we finally obtain $\TC( \Lambda V) \leq 2\cat(\Lambda V)+\chi_{\pi}(\Lambda V).$
	\end{proof}
	In particular, if $(\Lambda V,d)$ is an elliptic coformal model, we have the following corollary
	\begin{corollary}
		Let $(\Lambda V,d)$ be an elliptic coformal minimal model such that there exists an extension $(\Lambda Z,d) \hookrightarrow (\Lambda V,d)$ where $(\Lambda Z,d)$ is a pure elliptic algebra satisfying $Z^{even}= V^{even}$. Then
		$$\TC( \Lambda V) \leq \dim V.$$
	\end{corollary}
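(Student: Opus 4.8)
The plan is to obtain this as a direct formal consequence of the preceding theorem together with the known value of the rational Lusternik--Schnirelmann category of an elliptic coformal model. First I would observe that a coformal minimal model is precisely one whose differential has constant length $l=2$; hence the hypotheses of the preceding theorem are satisfied, since by assumption $(\Lambda V,d)$ is elliptic with differential of constant length and admits an extension $(\Lambda Z,d)\hookrightarrow(\Lambda V,d)$ with $(\Lambda Z,d)$ pure elliptic and $Z^{even}=V^{even}$. Applying that theorem gives
$$\TC(\Lambda V)\leq 2\cat(\Lambda V)+\chi_{\pi}(\Lambda V).$$

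Next I would invoke the Lechuga--Murillo formula (equivalently, the Félix--Halperin computation) already recalled in the proof of the preceding theorem, namely $\cat(\Lambda V)=\dim V^{odd}+2\dim V^{even}(l-2)$, which collapses to $\cat(\Lambda V)=\dim V^{odd}$ when $l=2$. Recalling also that $\chi_{\pi}(\Lambda V)=\dim V^{even}-\dim V^{odd}$, substitution yields
$$\TC(\Lambda V)\leq 2\dim V^{odd}+\dim V^{even}-\dim V^{odd}=\dim V^{odd}+\dim V^{even}=\dim V,$$
which is the claimed bound.

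There is essentially no obstacle here: the statement reduces to bookkeeping with quantities already controlled by earlier results, and the only point worth making explicit is that coformality forces $l=2$, so that simultaneously the preceding theorem applies and the category term simplifies to $\dim V^{odd}$. For completeness I would add a sentence noting that this corollary specializes to Corollary~\ref{corollary} when the subspace $U$ is trivial, i.e.\ when $(\Lambda V,d)$ is itself pure.
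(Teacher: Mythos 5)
Your proposal is correct and follows essentially the same route as the paper: apply the preceding theorem to get $\TC(\Lambda V)\leq 2\cat(\Lambda V)+\chi_{\pi}(\Lambda V)$, then use $\cat(\Lambda V)=\dim V^{odd}$ for elliptic coformal models (the paper cites F\'elix--Halperin directly, while you reach the same value by specializing the Lechuga--Murillo formula at $l=2$) and substitute $\chi_{\pi}(\Lambda V)=\dim V^{even}-\dim V^{odd}$ to obtain $\dim V$. The two arguments are the same bookkeeping.
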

	\begin{proof}
		In this case, we have $\cat (\Lambda V) =\dim V^{odd}$ (see \cite{FH}) and by the previous theorem
		\begin{eqnarray*}
			\TC ( \Lambda V)&\leq & 2\cat (\Lambda V)+\chi_{\pi} (\Lambda V)\\
			&\leq & 2\dim V^{odd}+ \dim V^{even}- \dim V^{odd}\\
			&\leq & \dim V.
		\end{eqnarray*}
	\end{proof}
\section*{Acknowledgements}
This work has been partially supported by Portuguese Funds through FCT -- Funda\c c\~ao para a Ci\^encia e a Tecnologia, within the projects UIDB/00013/2020 and UIDP/00013/2020. S.H would like to thank the Moroccan center CNRST --Centre National pour la Recherche Scientifique et Technique for providing him with a research scholarship grant number: 7UMI2020.

\end{document}